%
%
%


\documentclass{jams-edited}

\usepackage{amssymb}

\usepackage{graphicx}

\usepackage[cmtip,all]{xy}

\usepackage{cite}
\usepackage{hyperref} 
\usepackage{color,soul} 
\usepackage{verbatim} 
 \usepackage{epsfig,epstopdf,verbatim,subfig,caption}
\usepackage[normalem]{ulem}  


\newtheorem{theorem}{Theorem}[section]
\newtheorem{lemma}[theorem]{Lemma}

\theoremstyle{definition}
\newtheorem{definition}[theorem]{Definition}

\theoremstyle{remark}
\newtheorem{remark}[theorem]{Remark}

\numberwithin{equation}{section}


\newcommand{\mc}[1]{\mathcal{#1}}

\newcommand{\rea}{\mc{R}e(\alpha)}

\newcommand{\ceila}[1]{\lceil \rea \rceil}

\begin{document}

\title[Existence and Uniqueness results for a class of Generalized ..... ]{Existence and Uniqueness results for a class of Generalized Fractional Differential Equations}


\author[U.N. Katugampola]{Udita N. Katugampola}
\address{Department of Mathematics, University of Delaware, Newark DE 19716, USA.}
\thanks{Department of Mathematics, University of Delaware, Newark DE 19716, USA. \\ \mbox{} \hspace{.2cm} Email: uditanalin@yahoo.com   \hfill Preprint submitted for publication.}
\email{uditanalin@yahoo.com} 

\subjclass[2010]{26A33, 34A08, 34A12 }
\keywords{Fractional Calculus, Existence, Uniqueness, Generalized fractional differential equations, \mbox{} \hspace{.2cm} Riemann-Liouville fractional derivative, Hadamard fractional derivative, Erd\'{e}lyi-Kober type derivative}

\date{mmm dd, 2014} 

\begin{abstract}
The author (Bull. Math. Anal. App. 6(4)(2014):1--15), introduced a new fractional derivative,
\[
{}^\rho \mathcal{D}_a^\alpha f (x) =  \frac{\rho^{\alpha-n+1 }}{\Gamma({n-\alpha})} \, \bigg(x^{1-\rho} \,\frac{d}{dx}\bigg)^n \int^x_a \frac{\tau^{\rho-1} f(\tau) }{(x^\rho - \tau^\rho)^{\alpha-n+1}}\, d\tau 
\]
which generalizes two familiar fractional derivatives, namely, the Riemann-Liouville and the Hadamard fractional derivatives to a single form. In this paper, we derive the existence and uniqueness results for a generalized fractional differential equation governed by the fractional derivative in question. 
\end{abstract}

\maketitle



\setcounter{section}{0}
\section{Introduction}\label{sec:1}

\setcounter{section}{1}
\setcounter{equation}{0}\setcounter{theorem}{0}

In recent years, Fractional Differential Equations (FDE) gain enormous attention among scientists due to the applications which were not possible with ordinary or partial differential equations of integer order. FDEs becomes a very successful tool in modeling anomalous diffusion and fractal-like nature \cite{B-T,kolwankar1,kolwankar5,kolwankar2,ano1,ano3}. Agrawal discusses diffusion and heat equations of fractional order in \cite{Agrawal-dif1,Agrawal-dif2,Agr-7}. Agrawal et al, Baleanu and others investigated the Boundary Value Problems for Fractional Differential Equations, for example see  \cite{AgBeHa-9,Agr-7}. Fractional dynamic models, fractional control systems, fractional population dynamics models and fractional fluid dynamics all involve at least one ordinary or partial fractional derivative.

In fractional calculus, fractional derivatives are defined via fractional integrals, see for example \cite{KilSri,Podlubny,Samko}. The author introduces an Erd\'{e}lyi-Kober type fractional integral operator in \cite{key-0} and uses that integral to define a new fractional derivative in \cite{key-00}, which generalizes the Riemann-Liouville and the Hadamard fractional derivatives to a single form and argued that it is not possible to derive Hadamard equivalence operators from the corresponding Erd\'{e}lyi-Kober type operators, thus making the new derivative more appropriate for modeling certain phenomena which undergo bifurcation-like behaviors. For further properties of the Erd\'{e}lyi-Kober operators, the interested reader is refereed to, for example, \cite{Kiryakova,Kir,Luchko}. According to the literature, the newly defined fractional operators are now known as Katugampola fractional integral and derivatives, respectively. It can be shown that the derivatives in question satisfy the fractional derivative criteria (test) given in \cite{what2,corr1}. These operators have applications in areas such as probability theory \cite{prob1}, theory of inequalities \cite{ineq1,HH1,HH2}, variational principle \cite{vara1}, numerical analysis \cite{numr1}, and Langevin equations \cite{lang1}. The interested reader is referred, for example, to \cite{Herr, u-1,u-2,u-3,u-4,u-5,u-6,u-7,u-8,u-9,u-10} for further results on these and similar operators. In \cite{key-000}, the author derived the Mellin transforms of the generalized fractional integrals and derivatives defined in \cite{key-0} and \cite{key-00}, respectively. In the same reference, the author also obtained a recurrence formula given by
\[
S(n,k)=\sum_{i=0}^r \big(m+(m-r)(n-2)+k-i-1\big)_{r-i}\binom{r}{i} S(n-1,k-i), \;\;  0< k\leq n,
\]
with $S(0,0)=1$ and $S(n,0)=S(n,k)=0$ for $n>0$ and $1+min\{r,m\}(n-1) < k $ or $k\leq 0$. Here $(\lambda)_r$ represent the Pochhammer's symbol given by $(\lambda)_r = \Gamma(\lambda+r)/\Gamma(\lambda), \; (\lambda)_0=1$. This generates a class of sequences for fixed $r, m \in \mathbb{N}$ which have similar characteristics as Stirling numbers of the $2^{nd}$ kind. These are also the coefficients generated by the differential operator $x^r\frac{d^m}{dx^m}$, for $r, m \in \mathbb{N}$.  Furthermore, it has shown that the case $r=1$ and $m=2$ also generates the same sequence as that of $r=1$ and $m=2$. It happens that the same result is true for any $r, m \in \mathbb{N}$ \cite{key-000}. 

The author also defined a new class of sequences generated by the differential operator $x^r\frac{d^m}{dx^m}$ for $r \in \{\frac{1}{2}, \frac{1}{3}, \frac{1}{4}, \frac{1}{5}, \ldots \}$, $m \in \mathbb{N}$ and in turn showed that the case $r=1/2$ and $m=1$ corresponds to the generalized Laguerre polynomials \cite{key-000}. Those sequences are listed on the Sloane's \textit{On-line Encyclopedia of Integer Sequences} (OEIS) at \url{http://oeis.org}. For example, see A223168-A223172 and A223523-A223532 \cite{key-000}.  

In this paper we study the existence and uniqueness results of a generalized fractional differential equation governed by the generalized fractional derivative in question. The paper is organized as follows: In the next section we give the necessary definitions and in the subsequent sections we develop the existence and uniqueness results. For simplicity we only consider the left-sided operators here. The right-sided operators can be treated similarly. 

\section{Generalized Fractional Integrals and Derivatives}

\begin{definition} The Riemann-Liouville fractional integrals and derivatives of order $\alpha \in \mathbb{C},\, Re(\alpha)\geq 0$ are given by
\begin{equation*}
(I^\alpha_{a+} f)(x) = \frac{1}{\Gamma(\alpha)}\int_a^x (x - \tau)^{\alpha -1} f(\tau) d\tau,  
\label{eq:lRLI}
\end{equation*}
and 
\begin{equation*}
(D^\alpha_{a+} f)(x)=\bigg(\frac{d}{dx}\bigg)^n \, \Big(I^{n-\alpha}_{a+} f\Big)(x), \quad x>a,
\label{eq:lRLD}
\end{equation*}
respectively, where $n = \lceil Re(\alpha) \rceil$ and $\Gamma(\alpha)$ is the Gamma function. 
\end{definition}

\begin{definition} The Hadamard fractional integral and derivative are given by,
\begin{equation*}
\mathbf{I}^\alpha_{a+} f(x) = \frac{1}{\Gamma(\alpha)}\int_a^x \Bigg(\log\frac{x}{\tau}\Bigg)^{\alpha -1} f(\tau)\frac{d\tau}{\tau}, 
\label{eq:HI}
\end{equation*}
and
\begin{equation*}
\mathbf{D}^\alpha_{a+}f(x)=\frac{1}{\Gamma(n-\alpha)}\Bigg(x\frac{d}{dx}\Bigg)^n\int_a^x \Bigg(\log\frac{x}{\tau}\Bigg)^{n-\alpha +1}f(\tau)\frac{d\tau}{\tau}, 
\label{eq:HD}
\end{equation*}
respectively, for $x > a\geq 0$, and $Re(\alpha) > 0$.
\end{definition}

Next we give the definitions of the generalized fractional operators introduced in \cite{key-0,key-00}:

\begin{definition}The generalized \emph{left-sided} fractional integral ${}^\rho I^\alpha_{a+}f$ of order $\alpha \in \mathbb{C} \; (Re(\alpha) > 0)$ is defined by
\begin{equation}
\big({}^\rho \mathcal{I}^\alpha_{a+}f\big)(x) = \frac{\rho^{1- \alpha }}{\Gamma({\alpha})} \int^x_a \frac{\tau^{\rho-1} f(\tau) }{(x^\rho - \tau^\rho)^{1-\alpha}}\, d\tau 
\label{eq:df1}
\end{equation}
for $x > a$, if the integral exists.

The generalized fractional derivative, corresponding to the generalized fractional integrals (\ref{eq:df1}), is defined, for $0 \leq a < x $, by
\begin{align}
\big({}^\rho &\mathcal{D}^\alpha_{a+}f\big)(x) = \bigg(x^{1-\rho} \,\frac{d}{dx}\bigg)^n\,\, \big({}^\rho \mathcal{I}^{n-\alpha}_{a+}f\big)(x)\nonumber\\
 &= \frac{\rho^{\alpha-n+1 }}{\Gamma({n-\alpha})} \, \bigg(x^{1-\rho} \,\frac{d}{dx}\bigg)^n \int^x_a \frac{\tau^{\rho-1} f(\tau) }{(x^\rho - \tau^\rho)^{\alpha-n+1}}\, d\tau
\label{eq:gd1}
\end{align}
if the integral exists.
\end{definition}

\begin{definition} The Caputo-type generalized fractional derivative, ${}^\rho_c D^\alpha_{a+}$ is defined via the above generalized fractional derivative (\ref{eq:gd1}) as follows 
\begin{equation}
{}^\rho_c D^\alpha_{a+}f(x) = \Bigg({}^\rho \mathcal{D}^\alpha_{a+}\,\bigg[f(t) - \sum^{n-1}_{k=0}\frac{f^{(k)}(a)}{k!}(t-a)^k\bigg]\Bigg)\,(x)
\label{eq:cgd}
\end{equation}
where $n =\lceil Re(\alpha) \rceil$.
\end{definition}
The interested reader may find Caputo-type modification of the Erd\'{e}lyi-Kober fractional derivative in the works of Luchko \cite{Luchko}. 

\section{Fractional Differential Equations}
In this section we prove the existence and uniqueness results of the solutions to Cauchy type problems for differential equations of fractional order on a finite interval of the real axis in the space of continuous functions. We only consider the fractional differential equations with Caputo type derivatives $\big({}^\rho_cD^\alpha_{a+}y\big)(x)$ due to their applications in several branches of sciences. The Riemann-Liouville type modifications will be discussed in a forthcoming paper.  

A differential equation with at least one fractional derivative is called a \emph{Fractional Differential Equation} (FDE). Thus, the nonlinear differential equation of fractional order $\alpha \,\, (\rea >0)$ on a finite interval $[a,\, b]$ of the real axis $\mathbb{R}$ has the form
\begin{equation}
\big({}^\rho_cD^\alpha_{a+}y\big)(x)=f[x,y(x)] \quad ;x > a.
\label{eq:ch4-eq1}
\end{equation}

\section{Existence results for Fractional Order ODEs}
As in the case of classical differential equations, the existence and uniqueness are essential for solving fractional differential equations. It is very important to know whether there is a solution to a given differential equation before hand. Otherwise, all the efforts to find a numerical or analytical solution will become worthless. With that in mind, here we shall discuss those results which are related to generalized fractional operators. We begin with the following Cauchy type initial value problem (IVP) of the form:
\begin{equation}
 \big({}^\rho_cD^\alpha_{0+}y\big)(x)=f(x,y(x)),\label{eq:pb-de1}
\end{equation}
with the initial condition
\begin{equation}
  D^k\,y(0)=y_0^{(k)}, \quad k = 0, 1, \ldots, m-1, \label{eq:pb-ic}
\end{equation}
where $m = \lceil \alpha \rceil$ and $\alpha \in \mathbb{R}$. The major result of this paper is a generalization of the findings in \cite{Diethelm-NFord,AFDE} for the existence that corresponds to the classical Peano existence theorem for first order differential equations.

\begin{theorem}[Existence]
Let $\alpha > 0$ and $m = \lceil \alpha \rceil$. Also let $y_0^{(0)}, \ldots, y_0^{(m-1} \in \mathbb{R}, \,\, K > 0 \,\, \text{and} \,\,h^* > 0.$ Define $G := \{(x, y) : x \in [0, h^*], \,\big|y - \sum_{k=0}^{m-1} \frac{x^k y_0^{(k)}}{k!} \big| \leq K \}$, and let the function $f:G \rightarrow \mathbb{R}$ be continuous. Further, define $ M := \sup_{(x, y) \in G} |f(x, z)|$ and
\begin{equation}
	h := \begin{cases}
		h^* & \text{if } M = 0,\\
                \min\Big\{h^*, \Big(\frac{K\Gamma(\alpha + 1)\rho^\alpha}{M}\Big)^{\frac{1}{\alpha}}\Big\} & \text{otherwise}.
\end{cases}
\label{eq:th1-con1}
\end{equation}
Then, there exists a function $y \in C[0, h]$ that solves the IVP \emph{(\ref{eq:pb-de1})} and \emph{(\ref{eq:pb-ic})}. 
\end{theorem}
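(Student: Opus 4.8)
The plan is to follow the classical route used to prove the Peano existence theorem for integer-order ODEs, adapted to the generalized fractional setting via the Schauder fixed point theorem. First I would convert the Cauchy problem \eqref{eq:pb-de1}--\eqref{eq:pb-ic} into an equivalent integral equation. The key input here is the composition identity for the generalized operators: if $y$ is continuous and $g = {}^\rho_c D^\alpha_{0+} y$, then $y(x) = \sum_{k=0}^{m-1} \frac{y_0^{(k)}}{k!} x^k + \big({}^\rho \mathcal{I}^\alpha_{0+} g\big)(x)$, which uses that ${}^\rho \mathcal{I}^\alpha_{0+}$ is a right inverse of ${}^\rho \mathcal{D}^\alpha_{0+}$ together with the subtraction of the Taylor polynomial built into the Caputo-type definition \eqref{eq:cgd}. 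Thus solving the IVP is equivalent to finding $y \in C[0,h]$ with
\[
y(x) = \sum_{k=0}^{m-1}\frac{y_0^{(k)}}{k!}x^k + \frac{\rho^{1-\alpha}}{\Gamma(\alpha)}\int_0^x \frac{\tau^{\rho-1} f(\tau, y(\tau))}{(x^\rho-\tau^\rho)^{1-\alpha}}\, d\tau .
\]

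Next I would set up the fixed point framework. Let $U$ be the set of $y \in C[0,h]$ with $\big|y(x) - \sum_{k=0}^{m-1}\frac{x^k y_0^{(k)}}{k!}\big| \le K$ for all $x \in [0,h]$; this is a nonempty, closed, bounded, convex subset of the Banach space $C[0,h]$. Define the operator $T$ on $U$ by the right-hand side of the integral equation above. I would then verify the hypotheses of Schauder's theorem: (i) $T$ maps $U$ into $U$ — this is where the choice of $h$ in \eqref{eq:th1-con1} enters, since $\big|(Ty)(x) - \sum \frac{x^k y_0^{(k)}}{k!}\big| \le \frac{\rho^{1-\alpha}M}{\Gamma(\alpha)}\int_0^x \tau^{\rho-1}(x^\rho-\tau^\rho)^{\alpha-1}\,d\tau = \frac{M x^{\rho\alpha}}{\rho^\alpha \Gamma(\alpha+1)} \le \frac{M h^{\rho\alpha}}{\rho^\alpha\Gamma(\alpha+1)}$; one needs to check that the evaluation of the elementary integral $\int_0^x \tau^{\rho-1}(x^\rho-\tau^\rho)^{\alpha-1}d\tau = \frac{x^{\rho\alpha}}{\rho\alpha}$ (substitute $u = \tau^\rho$) gives exactly the constant appearing in \eqref{eq:th1-con1}, so the bound is $\le K$ when $M \ne 0$; the $M=0$ case is trivial. (ii) $T(U)$ is equicontinuous — I would estimate $|(Ty)(x_2) - (Ty)(x_1)|$ for $x_1 < x_2$ by splitting the integral at $x_1$ and bounding each piece using continuity of the polynomial part and the $L^1$-type control of the kernel, showing the bound tends to $0$ as $x_2 - x_1 \to 0$ uniformly in $y \in U$; together with uniform boundedness this gives relative compactness of $T(U)$ by Arzel\`a--Ascoli. (iii) $T$ is continuous on $U$ — since $f$ is continuous on the compact set $G$ it is uniformly continuous there, and $y \mapsto f(\cdot, y(\cdot))$ is then continuous from $U$ into $C[0,h]$, after which continuity of the fractional integral operator finishes it.

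Having verified (i)--(iii), Schauder's fixed point theorem yields a fixed point $y \in U$ of $T$, which is the desired solution in $C[0,h]$; one should note that $f(x,y(x))$ is continuous, so the fractional integral representation indeed lands back in the class where the composition identity applies, closing the equivalence with \eqref{eq:pb-de1}--\eqref{eq:pb-ic}. I expect the main obstacle to be the equicontinuity estimate in step (ii): the kernel $(x^\rho-\tau^\rho)^{\alpha-1}$ is singular at $\tau = x$ when $\alpha < 1$, so the splitting must be done carefully and the bound on $\big|(x_2^\rho-\tau^\rho)^{\alpha-1} - (x_1^\rho-\tau^\rho)^{\alpha-1}\big|$ handled by a change of variables that reduces it to the known Riemann--Liouville estimate (this is essentially where the substitution $u=\tau^\rho$, $v = x^\rho$ transfers the problem to the classical case treated in \cite{Diethelm-NFord,AFDE}); the remaining obstacle is making the composition/inversion identity ${}^\rho_c D^\alpha_{0+}\big({}^\rho\mathcal{I}^\alpha_{0+} g\big) = g$ fully rigorous for merely continuous $g$, which I would either cite from \cite{key-00} or reduce to the Riemann--Liouville statement via the same power substitution.
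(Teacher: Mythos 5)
Your proposal follows essentially the same route as the paper: reduction to the Volterra integral equation (the paper's Lemma 4.1), the set $U = \{y \in C[0,h] : \|y - T\|_\infty \le K\}$, the self-mapping estimate via $\int_0^x \tau^{\rho-1}(x^\rho-\tau^\rho)^{\alpha-1}\,d\tau = x^{\rho\alpha}/(\rho\alpha)$, Arzel\`a--Ascoli for relative compactness, and Schauder's fixed point theorem. The only notable difference is cosmetic --- you propose transferring the equicontinuity estimate to the Riemann--Liouville case by the substitution $u=\tau^\rho$ whereas the paper carries it out directly by cases on $\alpha\lessgtr 1$, and you explicitly flag continuity of the integral operator, which the paper leaves implicit --- so the proposal is correct and matches the paper's argument.
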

The proof of this theorem uses the following lemma, which asserts the connection between a FDE and the corresponding Volterra type integral equation \cite{AFDE,Diethelm-NFord}.

\begin{lemma}\label{ch4-lm1}
  Assume the hypotheses of Theorem 4.1. 
	The function $y \in C[0, h]$ is a solution of the IVP (\ref{eq:pb-de1}) and (\ref{eq:pb-ic}), if and only if, it is a solution of the nonlinear Volterra integral equation of the second kind
\begin{align}
y(x) = \sum_{k=0}^{m-1} \frac{x^k}{k!} y^{(k)}(0) 
				+ \frac{\rho^{1-\alpha}}{\Gamma(\alpha)} \int_0^x (x^\rho - \tau^\rho)^{\alpha -1} \, \tau^{\rho - 1}\,f(\tau, y(\tau)) \, d\tau\ \label{eq:ch4-eq2}
\end{align}
where $m = \lceil \alpha \rceil$ and $\alpha \in \mathbb{R}$.
\end{lemma}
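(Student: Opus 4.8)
The plan is to establish the equivalence by converting the Caputo-type fractional differential equation into the integral equation and back, using the composition properties of the generalized fractional integral and derivative operators from Definition~2.3. The key technical facts I would need are: (i) the generalized fractional integral ${}^\rho\mathcal{I}^\alpha_{0+}$ applied to a suitable power of $\tau$ reproduces a power of $x$ (an analogue of $I^\alpha(x-a)^\beta \propto (x-a)^{\alpha+\beta}$), which governs how the initial-data polynomial behaves; (ii) the semigroup/inversion relation ${}^\rho\mathcal{D}^\alpha_{0+}\,{}^\rho\mathcal{I}^\alpha_{0+} g = g$ for continuous $g$; and (iii) the change of variables $u = \tau^\rho$, $v = x^\rho$ that transforms ${}^\rho\mathcal{I}^\alpha$ into an ordinary Riemann--Liouville integral, so that classical results can be transported.

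First I would prove the ``if'' direction: suppose $y\in C[0,h]$ satisfies (\ref{eq:ch4-eq2}). Observe that the integral term in (\ref{eq:ch4-eq2}) is exactly $\big({}^\rho\mathcal{I}^\alpha_{0+}g\big)(x)$ with $g(\tau)=f(\tau,y(\tau))$, which is continuous on $[0,h]$ since $f$ is continuous on $G$ and $(x,y(x))\in G$ by the definition of $h$ and $M$. Apply ${}^\rho_cD^\alpha_{0+}$ to both sides. By the definition (\ref{eq:cgd}) of the Caputo-type derivative, ${}^\rho_cD^\alpha_{0+}$ annihilates the polynomial $\sum_{k=0}^{m-1}\frac{x^k}{k!}y^{(k)}(0)$ (it is the subtracted Taylor polynomial), and on the integral term one uses ${}^\rho\mathcal{D}^\alpha_{0+}\,{}^\rho\mathcal{I}^\alpha_{0+}g = g$, which follows from ${}^\rho\mathcal{D}^\alpha_{0+} = (x^{1-\rho}\frac{d}{dx})^n\,{}^\rho\mathcal{I}^{n-\alpha}_{0+}$ together with the index law ${}^\rho\mathcal{I}^{n-\alpha}_{0+}\,{}^\rho\mathcal{I}^\alpha_{0+} = {}^\rho\mathcal{I}^{n}_{0+}$ and the fact that $(x^{1-\rho}\frac{d}{dx})^n\,{}^\rho\mathcal{I}^{n}_{0+} = \mathrm{id}$ on continuous functions (both are readily checked via the substitution $u=\tau^\rho$). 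This yields $\big({}^\rho_cD^\alpha_{0+}y\big)(x)=f(x,y(x))$. Conversely, evaluating (\ref{eq:ch4-eq2}) and its derivatives $D^k$ at $x=0$ kills the integral term (it vanishes to order $\alpha$ at $0$, hence so do its first $m-1$ derivatives) and leaves $D^k y(0)=y_0^{(k)}$, so the initial conditions (\ref{eq:pb-ic}) hold.

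For the ``only if'' direction, assume $y\in C[0,h]$ solves (\ref{eq:pb-de1}) and (\ref{eq:pb-ic}). Set $g(x)=f(x,y(x))\in C[0,h]$. From (\ref{eq:cgd}) and (\ref{eq:pb-de1}), ${}^\rho\mathcal{D}^\alpha_{0+}\big[y - T_{m-1}y\big] = g$ where $T_{m-1}y$ is the degree-$(m-1)$ Taylor polynomial of $y$ at $0$. Applying ${}^\rho\mathcal{I}^\alpha_{0+}$ to both sides and using that ${}^\rho\mathcal{I}^\alpha_{0+}\,{}^\rho\mathcal{D}^\alpha_{0+}w = w$ up to a lower-order polynomial whose coefficients are determined by the vanishing of $w = y-T_{m-1}y$ and its derivatives at $0$, one recovers $y(x) - T_{m-1}y(x) = \big({}^\rho\mathcal{I}^\alpha_{0+}g\big)(x)$, which is precisely (\ref{eq:ch4-eq2}) after writing out $T_{m-1}y$ using (\ref{eq:pb-ic}). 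The main obstacle here is the composition identity ${}^\rho\mathcal{I}^\alpha\,{}^\rho\mathcal{D}^\alpha = \mathrm{id} - (\text{polynomial})$: one must be careful that the ``polynomial'' correction terms are genuinely of the form $\sum c_k x^{\rho k}$ or $\sum c_k x^{k}$ and that they are killed by the Caputo subtraction and the initial conditions; I would handle this cleanly by pushing everything through the substitution $v=x^\rho$, $u=\tau^\rho$, which reduces all of these to the corresponding, already-known Riemann--Liouville identities, and then translating back.
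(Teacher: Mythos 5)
Your proposal is correct and follows essentially the same route as the paper: both directions are handled by the composition identities for the generalized operators (the left-inverse property ${}^\rho\mathcal{D}^\alpha_{0+}\,{}^\rho\mathcal{I}^\alpha_{0+}=\mathrm{id}$, the semigroup law, and the behaviour of the integral term at $x=0$ for the initial conditions). The only cosmetic difference is in the ``only if'' direction, where the paper inverts in two steps (applying ${}^\rho\mathcal{I}^m_{0+}$ against $\big(x^{1-\rho}D\big)^m$ and then ${}^\rho\mathcal{D}^{m-\alpha}_{0+}$ against ${}^\rho\mathcal{I}^{m-\alpha}_{0+}$) instead of the single composition ${}^\rho\mathcal{I}^\alpha_{0+}\,{}^\rho\mathcal{D}^\alpha_{0+}=\mathrm{id}$ modulo kernel terms that you invoke; your explicit remark that those kernel terms are powers of $x^\rho$ rather than of $x$, handled via the substitution $u=\tau^\rho$ reducing to the Riemann--Liouville case, is if anything more careful than the paper's treatment of the same point.
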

\begin{proof}
First assume that $y$ is a solution of the integral equation. Then we have
\begin{equation*}
y(x) = \sum_{k=0}^{m-1} \frac{x^k}{k!} y^{(k)}(0) + {}^\rho_c I^\alpha_0 f(\cdot, y(\cdot))(x).
\end{equation*}
After applying the differential operator ${}^\rho_c D^\alpha_0$ to both sides of this relation and by the Theorem 4.2 of \cite{key-00}, we obtain that $y$ also solves the differential equation (\ref{eq:pb-de1}). Now to show that the Volterra integral equation satisfies the initial conditions, we let $0 \le j \le m - 1$, and apply the operator $D^j$ to the integral equation, to obtain
\begin{equation*}
D^j y(x) = \sum_{k=0}^{m-1} \frac{D^j x^k}{k!} y^{(k)}(0) + D^j {}^\rho_c I^j_0 \,{}^\rho_c I^{\alpha -j}_0 f(\cdot, y(\cdot))(x).
\end{equation*}
by virtue of the semigroup property of the generalized fractional integration \cite{key-0}. It is clear that summands vanish identically for $j > k$ and for $j < k$ when $x = 0$. Since the integral is zero when $x = 0$, we have $D^j y(0) = y^{(j)}_0$ as required by the initial condition. Hence $y$ solves the given IVP. 

On the other hand, if $y$ is a continuous solution of the IVP, define $z(x) := f(x, y(x))$. Then $z \in C[0, h],$ by the assumptions on $y$ and $f$. Then, using the definition of the Caputo-type differential operator, the differential equation (\ref{eq:pb-de1}) can be written as 
\begin{align*}
 z(x) &= f(x,y(x))= {}^\rho D^\alpha_{0+}\bigg(y -\sum_{k=0}^{m-1} \frac{x^k}{k!}\,y_0^{(k)}\bigg)(x)\\
&= \big(x^{1-\rho} \,D \big)^m\,\, \big({}^\rho \mathcal{I}^{m-\alpha}_{0+}\big)\bigg(y -\sum_{k=0}^{m-1} \frac{x^k}{k!}\,y_0^{(k)}\bigg)(x).
\end{align*}
Since we have that ${}^\rho \mathcal{I}^m_{0+} \big(x^{1-\rho}\,D \big)^m g(x) = g(x)$ for $m \in \mathbb{N}, \,\, g \in C[0, h]$ and we are dealing with continuous functions, we may apply the operator ${}^\rho \mathcal{I}^m_{0+}$ to both sides of the equation and obtain
\[
{}^\rho \mathcal{I}^m_{0+}z(x)= {}^\rho \mathcal{I}^{m-\alpha}_{0+}\bigg(y -\sum_{k=0}^{m-1} \frac{x^k}{k!}\,y_0^{(k)}\bigg)(x).
\]
Applying the operator ${}^\rho D^{m-\alpha}_{0+}$ to this equation we find
\begin{align*}
 y(x) -\sum_{k=0}^{m-1} \frac{x^k}{k!}\,y_0^{(k)}(x) &= {}^\rho D^{m-\alpha}_{0+}\, {}^\rho \mathcal{I}^m_{0+}z(x) =  \big(x^{1-\rho}\,D \big)^1 \,{}^\rho \mathcal{I}^{1-m+\alpha}_{0+} \, {}^\rho \mathcal{I}^m_{0+}z(x)\\
&={}^\rho \mathcal{I}^\alpha_{0+}z(x).
\end{align*}
This completes the proof of the Lemma 4.1. 
\end{proof}

Now, we give the proof of the Theorem 4.1, 
which is a generalization of the Theorem 6.1 of \cite{AFDE}.

\begin{proof}(Theorem 4.1) 
If $M = 0$ then $f(x, y) = 0$ for all $(x, y) \in G$. In this case it is clear by direct substitution that the function $y:[0, h] \rightarrow \mathbb{R}$ with $y(x) = \sum_{k=0}^{m-1} y_0^{(k)} x^k/k!$ is a solution of the IVP (\ref{eq:pb-de1}) and (\ref{eq:pb-ic}). Hence a solution exists in this case.

For $M \ne 0$, we apply Lemma 4.1 
and prove that IVP (\ref{eq:pb-de1}) and (\ref{eq:pb-ic}) is equivalent to the Volterra integral equation (8). 
Define the polynomial $T$ that satisfies the initial condition, namely,
\begin{equation}
   T(x) := \sum_{k=0}^{m-1} \frac{x^k}{k!}\,y_0^{(k)} 
\label{ch4-poly1}
\end{equation}
and the set $U := \{y \in C[0, h]: \|y - T\|_\infty \leq K\}$. It is clear that $U$ is a closed and convex subset of the Banach space of all continuous functions on $[0,h]$, equipped with the Chebyshev norm. Hence, $U$ is a Banach space. $U$ is nonempty, since $T \in U$. We define the operator $A$ on this set $U$ by
\begin{equation}
  \big(Ay\big)(x) := T(x) + \frac{\rho^{1-\alpha}}{\Gamma(\alpha)} \int_0^x (x^\rho - t^\rho)^{\alpha -1} \, t^{\rho - 1}\,f(t, y(t)) \, dt.
\label{eq:A}
\end{equation}
Then, the Volterra equation 
(8) can be written as $y = Ay$ and thus, we have to show that $A$ has a fixed point. This is done by the Schauder's Second Fixed Point Theorem. We first show that $U$ is closed, that is, $Ay \in U$ for $y \in U$. We begin by noting that, for $0 \le x_1 \le x_2 \le h$,
\begin{align}
 \big|(Ay)(x_1) - (Ay)(x_2) + T(x_2) - T(x_1)\big| 
&=\frac{\rho^{1-\alpha}}{\Gamma(\alpha)} \bigg|\int_0^{x_1} (x_1^\rho - t^\rho)^{\alpha -1} \, t^{\rho - 1}\,f(t, y(t)) dt \nonumber\\
&\hspace{3cm} - \int_0^{x_2} (x_2^\rho - t^\rho)^{\alpha -1} \, t^{\rho - 1}\,f(t, y(t)) dt\bigg|\nonumber\\
&=\frac{\rho^{1-\alpha}}{\Gamma(\alpha)} \bigg|\int_0^{x_1}\big[(x_1^\rho - t^\rho)^{\alpha -1} - (x_2^\rho - t^\rho)^{\alpha -1}\big]\, t^{\rho - 1}\,f(t, y(t)) dt \nonumber\\
&\hspace{3cm} + \int_{x_1}^{x_2} (x_2^\rho - t^\rho)^{\alpha -1} \, t^{\rho - 1}\,f(t, y(t)) dt\bigg|\nonumber\\
&\le \frac{M \rho^{1-\alpha}}{\Gamma(\alpha)}\bigg(\int_0^{x_1}\big|(x_1^\rho - t^\rho)^{\alpha -1} - (x_2^\rho - t^\rho)^{\alpha -1}\big|\, t^{\rho - 1} dt \nonumber\\
&\hspace{3cm}+ \int_{x_1}^{x_2} (x_2^\rho - t^\rho)^{\alpha -1} \, t^{\rho - 1}dt\bigg)_.\nonumber
\end{align}
 The second integral in the right-hand side of the last inequality has the value $\tfrac{1}{\rho\alpha}(x_2^\rho - t^\rho)^\alpha$. For the first integral, consider the three cases $\alpha < 1, \,\alpha = 1$ and $\alpha >1 $, separately. In the case $\alpha = 1$, the integral has the value zero. For $\alpha < 1$, we have $(x_1^\rho - t^\rho)^{\alpha-1} \ge (x_2^\rho - t^\rho)^{\alpha-1}$. Thus,
\begin{align*}
  \int_0^{x_1}\big|(x_1^\rho - t^\rho)^{\alpha -1} - (x_2^\rho - t^\rho)^{\alpha -1}\big|\, t^{\rho - 1} dt 
&= \int_0^{x_1}\big[(x_1^\rho - t^\rho)^{\alpha -1} - (x_2^\rho - t^\rho)^{\alpha -1}\big]\, t^{\rho - 1} dt\\
&=\frac{1}{\rho\alpha}(x_1^\rho\alpha - x_2^\rho\alpha) + \frac{1}{\rho\alpha}(x_2^\rho - x_1^\rho)^\alpha \\
&\le \frac{1}{\rho\alpha}(x_2^\rho - x_1^\rho)^\alpha_.
\end{align*} 
Finally, if $\alpha > 1$ then $(x_1^\rho - t^\rho)^{\alpha-1} \le (x_2^\rho - t^\rho)^{\alpha-1}$ and hence
\begin{align*}
  \int_0^{x_1}\big|(x_1^\rho - t^\rho)^{\alpha -1} - (x_2^\rho - t^\rho)^{\alpha -1}\big|\, t^{\rho - 1} dt 
&= \int_0^{x_1}\big[(x_2^\rho - t^\rho)^{\alpha -1} - (x_1^\rho - t^\rho)^{\alpha -1}\big]\, t^{\rho - 1} dt\\
&=\frac{1}{\rho\alpha}(x_2^\rho\alpha - x_1^\rho\alpha) - \frac{1}{\rho\alpha}(x_2^\rho - x_1^\rho)^\alpha \\
&\le \frac{1}{\rho\alpha}(x_2^\rho\alpha - x_1^\rho\alpha)_.
\end{align*} 
Combining these results, we have  
\begin{align}
  \big|(Ay)(x_1) - (Ay)(x_2) &+ T(x_2) - T(x_1)\big| 
\le \begin{cases}
      \frac{2M}{\rho^\alpha\Gamma(\alpha +1)}(x_2^\rho - x_1^\rho)^\alpha \label{ch4-eq3}\\
  \frac{M}{\rho^\alpha\Gamma(\alpha +1)} \big[(x_2^\rho - x_1^\rho)^\alpha + x_2^{\rho\alpha} - x_1^{\rho\alpha},
\end{cases}
\end{align}
if $\alpha \le 1$ and $\alpha > 1$, respectively. In either case, the expression on the right-hand side of (\ref{ch4-eq3}) converges to $0$ as $x_2 \rightarrow x_1$, which proves that $Ay$ is a continuous function, since the polynomial $T(x)$ itself is continuous. It is also true that for $y \in U$ and $x \in [0, h]$, 
\begin{align}
  \big|(Ay)(x) - T(x)\big| &= \frac{\rho^{1-\alpha}}{\Gamma(\alpha)} \bigg|\int_0^x (x^\rho - t^\rho)^{\alpha -1} \, t^{\rho - 1}\,f(t, y(t)) \, dt\bigg| \nonumber\\
  &\le \frac{M}{\rho^\alpha\Gamma(\alpha +1)} x^{\rho\alpha} \le \frac{M}{\rho^\alpha\Gamma(\alpha +1)} h^{\rho\alpha} \nonumber\\
  &\le \frac{M}{\rho^\alpha\Gamma(\alpha +1)} \cdot \frac{\rho^\alpha K \Gamma(\alpha +1)}{M} = K.
\end{align}
by the definition of $h$. Thus, we have $Ay \in U$ if $y \in U$, i.e. $A$ maps the set $U$ into itself. We only left to show that $A(U) := \{Au : u \in U\}$ is relatively compact. This is done by the use of Arzel\`{a}-Ascoli Theorem. To show that the set $A(U)$ is uniformly bounded, let $z \in A(U)$. We see that, for all $x \in [0, h]$,
\begin{align*}
 |z(x)| &= \big|(Ay)(x)\big| \nonumber\\
 &\le \|T\|_\infty + \frac{\rho^{1-\alpha}}{\Gamma(\alpha)} \int_0^x (x^\rho - t^\rho)^{\alpha -1} \, t^{\rho - 1}|f(t, y(t))| \, dt\\
&\le \|T\|_\infty + \frac{1}{\rho^\alpha\Gamma(\alpha +1)} Mh^\alpha \le \|T\|_\infty + K,
\end{align*}
which is the required boundedness property. The equicontinuity property can be derived easily from (\ref{ch4-eq3}) above. For $0 \le x_1 \le x_2 \le h$, we proved in the case $\alpha \le 1$ that
\[
 \big|(Ay)(x_1 - x_2) + T(x_2) - T(x_1)\big| \le \frac{2M}{\rho^\alpha\Gamma(\alpha +1)}(x_2^\rho - x_1^\rho)^\alpha_.   
\]
After using the Triangle Inequality and the Mean Value Theorem, we have that
\begin{align*}
\big|(Ay)(x_1) - (Ay)(x_2)\big| 
& \le | T(x_1) - T(x_2)| + \frac{2M}{\rho^\alpha\Gamma(\alpha +1)}(x_2^\rho - x_1^\rho)^\alpha \\
& = | T(x_1) - T(x_2)| + \frac{2M}{\Gamma(\alpha +1)} (x_2 - x_1)^\alpha \zeta^{\alpha(\rho - 1)}
\end{align*}
for some $\zeta \in [x_1, x_2] \subseteq [0, h].$ Thus, if $|x_2 - x_1| < \delta$, we have
\[
\big|(Ay)(x_1) - (Ay)(x_2)\big| \le M^{'}\delta + \frac{2M}{\Gamma(\alpha +1)} \delta^\alpha h^{\alpha(\rho - 1)}
\]
for some $M^{'} > 0$, since $T(x)$ is uniformly continuous in the closed interval $[0, h]$. Noting that the expression on the right-hand side is independent of $y, x_1$ and $x_2$, we see that the set $A(U)$ equicontinous. The case $\alpha > 1$ can be treated similarly. In either case the Arzel\`{a}-Ascoli Theorem yields that $A(U)$ is relatively compact, and hence Schauder's second Fixed Point Theorem asserts that $A$ has a fixed point. This fixed point is the required solution of the IVP (\ref{eq:pb-de1}) and (\ref{eq:pb-ic}). This completes the proof.
\end{proof}

\section{Uniqueness results for Fractional Order ODEs}
Now we discuss the uniqueness results for single term fractional order differential equations. Before we state the main theorem of this section we notice the following property of the operator $A$(defined in \ref{eq:A}). Thus, let $y_1, y_2 \in C[0, h]\subseteq [0, x]$ and suppose there exists a constant $L > 0$ independent of $x, y_1$ and $y_2$ such that $|f(x, y_1) - f(x, y_2)| \le L|y_1 - y_2|$ for all $x \in [0, h]$. Then we have
\begin{align*}
\|Ay_1 -Ay_2\|_{L_\infty[0, x]} 
&= \frac{\rho^{1-\alpha}}{\Gamma(\alpha)} \sup_{0 \le \omega \le x} \bigg|\int_0^\omega (\omega^\rho - t^\rho)^{\alpha -1} \, t^{\rho - 1}\,\big[f(t, y_1(t)) - f(t, y_2(t))\big] \, dt \bigg| \\
&\le \frac{L\rho^{1-\alpha}}{\Gamma(\alpha)} \,\sup_{0 \le \omega \le x} \bigg|\int_0^\omega (\omega^\rho - t^\rho)^{\alpha -1} \, t^{\rho - 1}\,\big|y_1(t) - y_2(t)\big| \, dt \bigg| \\
&\le \frac{L\rho^{1-\alpha}}{\Gamma(\alpha)} \|y_1 - y_2\|_{L_\infty[0, x]} \, \sup_{0 \le \omega \le x} \bigg|\int_0^\omega (\omega^\rho - t^\rho)^{\alpha -1} \, t^{\rho - 1} \, dt \bigg| \\
&\le \frac{L\rho^{1-\alpha}}{\Gamma(\alpha)} \|y_1 - y_2\|_{L_\infty[0, x]} \, \sup_{0 \le \omega \le x} \bigg|\frac{1}{\rho\alpha}(\omega^\rho - t^\rho)^{\alpha} \Big|^\omega_0 \bigg| \\
&= \frac{L\big(\frac{x^\rho}{\rho}\big)^{\alpha}}{\Gamma(1+\alpha)} \|y_1 - y_2\|_{L_\infty[0, x]}.
\end{align*}
Next, we have the following result;
\begin{theorem}\label{ch4-induc}
Let $A$ and $U$ be defined as in Theorem 4.1 
Also let $j \in \mathbb{N}_0, \,\, x \in [0, h]$ and $y,\tilde{y} \in U$. Suppose $f$ satisfies the Lipschitz condition with respect to the second variable with the Lipschitz constant $L$. Then
\begin{equation}\label{eq:induc}
\|A^jy -A^j\tilde{y} \|_{L_\infty[0, x]} \le \frac{L^j\big(\frac{x^\rho}{\rho}\big)^{\alpha j}}{\Gamma(1+\alpha j)} \|y - \tilde{y} \|_{L_\infty[0, x]}.
\end{equation}
\end{theorem}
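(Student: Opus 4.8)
The plan is to argue by induction on $j$, using the estimate established immediately before the statement as the engine. For $j=0$ the claim is trivial, since $\Gamma(1)=1$ and $\big(x^\rho/\rho\big)^0=1$, so both sides reduce to $\|y-\tilde y\|_{L_\infty[0,x]}$; and the case $j=1$ is precisely the inequality derived just above the theorem, applied with $y_1=y$ and $y_2=\tilde y$. Note also that, by the proof of Theorem 4.1, $A$ maps $U$ into itself, so each iterate $A^{j-1}y,\,A^{j-1}\tilde y$ lies in $U\subseteq C[0,h]$ and all the manipulations below are legitimate.

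For the inductive step, assume \eqref{eq:induc} holds with $j$ replaced by $j-1$, for every $x\in[0,h]$. Writing $A^jy=A\big(A^{j-1}y\big)$ and repeating the computation preceding the theorem (using the definition \eqref{eq:A} of $A$ and the Lipschitz hypothesis on $f$), I would obtain, for every $\omega\in[0,x]$,
\[
\big|(A^jy)(\omega)-(A^j\tilde y)(\omega)\big|
\le \frac{L\rho^{1-\alpha}}{\Gamma(\alpha)}\int_0^\omega (\omega^\rho-t^\rho)^{\alpha-1}\,t^{\rho-1}\,\big|(A^{j-1}y)(t)-(A^{j-1}\tilde y)(t)\big|\,dt.
\]
I then bound the integrand pointwise by the inductive hypothesis on $[0,t]$, using $\big|(A^{j-1}y)(t)-(A^{j-1}\tilde y)(t)\big|\le \|A^{j-1}y-A^{j-1}\tilde y\|_{L_\infty[0,t]}$ and the fact that $\|y-\tilde y\|_{L_\infty[0,t]}\le\|y-\tilde y\|_{L_\infty[0,x]}$ since the $L_\infty$-norm over $[0,s]$ is nondecreasing in $s$. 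This gives
\[
\big|(A^jy)(\omega)-(A^j\tilde y)(\omega)\big|
\le \frac{L^j\rho^{1-\alpha}}{\Gamma(\alpha)\,\Gamma(1+\alpha(j-1))}\,\|y-\tilde y\|_{L_\infty[0,x]}\int_0^\omega (\omega^\rho-t^\rho)^{\alpha-1}\,t^{\rho-1}\Big(\tfrac{t^\rho}{\rho}\Big)^{\alpha(j-1)}dt.
\]

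The computational core is the evaluation of the last integral. Substituting $u=t^\rho$ (so $du=\rho\,t^{\rho-1}\,dt$) converts it to $\rho^{-1-\alpha(j-1)}\int_0^{\omega^\rho}(\omega^\rho-u)^{\alpha-1}u^{\alpha(j-1)}\,du$, a Beta integral with value $\rho^{-1-\alpha(j-1)}\,\omega^{\rho\alpha j}\,\dfrac{\Gamma(\alpha)\,\Gamma(1+\alpha(j-1))}{\Gamma(1+\alpha j)}$. Putting this back, the factors $\Gamma(\alpha)$ and $\Gamma(1+\alpha(j-1))$ cancel and the powers of $\rho$ collect, since $1-\alpha-1-\alpha(j-1)=-\alpha j$, to produce exactly $\big(\omega^\rho/\rho\big)^{\alpha j}$; hence
\[
\big|(A^jy)(\omega)-(A^j\tilde y)(\omega)\big|\le \frac{L^j\big(\omega^\rho/\rho\big)^{\alpha j}}{\Gamma(1+\alpha j)}\,\|y-\tilde y\|_{L_\infty[0,x]}.
\]
Taking the supremum over $\omega\in[0,x]$ and using that $\omega\mapsto\omega^{\rho\alpha j}$ is increasing yields \eqref{eq:induc}, closing the induction. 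I expect the only genuine obstacle to be bookkeeping: carrying out the substitution $u=t^\rho$ to reduce the $\rho$-weighted kernel to a standard Beta integral and then tracking the several powers of $\rho$ and the two Gamma factors so that the telescoping with the inductive hypothesis comes out clean; the Lipschitz estimate, the monotonicity of the sup-norm in the interval, and the passage to the supremum are all routine.
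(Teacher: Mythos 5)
Your proof is correct and follows essentially the same route as the paper: induction on $j$, the Lipschitz bound to reduce $A^j$ to $A^{j-1}$, the induction hypothesis applied on $[0,t]$ inside the integral, and the substitution $u=t^\rho$ reducing the kernel to the Beta integral $\int_0^{\omega^\rho}(\omega^\rho-u)^{\alpha-1}u^{\alpha(j-1)}\,du$ whose Gamma factors telescope. Your bookkeeping of the powers of $\rho$ is in fact cleaner than the paper's, whose final displayed line contains a typo ($(x^\rho/\rho)^{\alpha}$ where $(x^\rho/\rho)^{\alpha j}$ is meant); no changes needed.
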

\begin{proof}
The proof is by induction. In the case $j=0$, the statement is trivial. Assume (\ref{eq:induc}) is true for $j-1 \in \mathbb{N}_0$. We begin by writing
\begin{align*}
\|A^jy -A^j\tilde{y} \|_{L_\infty[0, x]} &= \|A(A^{j-1}y) -A(A^{j-1}\tilde{y}) \|_{L_\infty[0, x]} \\
&= \frac{\rho^{1-\alpha}}{\Gamma(\alpha)} \sup_{0 \le \omega \le x} \bigg|\int_0^\omega (\omega^\rho - t^\rho)^{\alpha -1} \, t^{\rho - 1}\nonumber\\
&\hspace{2cm} \cdot\big[f(t, A^{j-1}y(t)) - f(t, A^{j-1}\tilde{y}(t))\big] \, dt \bigg|.
\end{align*}
Now, using the Lipschitz assumption of $f$ and the induction hypothesis:
\begin{align*}
\|A^jy -A^j\tilde{y}\|_{L_\infty[0, x]} 
&\le \frac{L\rho^{1-\alpha}}{\Gamma(\alpha)} \,\sup_{0 \le \omega \le x} \bigg|\int_0^\omega (\omega^\rho - t^\rho)^{\alpha -1} \, t^{\rho - 1}\,\big|A^{j-1}y(t) - A^{j-1}\tilde{y}(t)\big| \, dt \bigg| \\
&\le \frac{L\rho^{1-\alpha}}{\Gamma(\alpha)} \int_0^x (x^\rho - t^\rho)^{\alpha -1} \, t^{\rho - 1} \\
&\hspace{3.5cm}\cdot \sup_{0 \le \omega \le t} \big|A^{j-1}y(\omega) - A^{j-1}\tilde{y}(\omega)\big| \, dt \\
&\le \frac{L^j\rho^{1-\alpha j}}{\Gamma(\alpha)\Gamma(1+\alpha(j-1))} \int_0^x (x^\rho - t^\rho)^{\alpha -1} \, t^{\rho +\rho\alpha(j-1)- 1} \\
&\hspace{3.5cm} \cdot \sup_{0 \le \omega \le t} \big|y(\omega) - \tilde{y}(\omega)\big| \, dt \\
&\le \frac{L^j\rho^{1-\alpha j}}{\Gamma(\alpha)\Gamma(1+\alpha(j-1))} \, \sup_{0 \le \omega \le x} \big|y(\omega) - \tilde{y}(\omega)\big|\\
&\hspace{3.5cm}\cdot \int_0^x (x^\rho - t^\rho)^{\alpha -1} \, t^{\rho +\rho\alpha(j-1)- 1} \, dt \\
&\le \frac{L^j\rho^{1-\alpha j}}{\Gamma(\alpha)\Gamma(1+\alpha(j-1))} \,\|y - \tilde{y}\|_{L_\infty[0, x]}\\
&\hspace{3.5cm}\cdot \frac{\Gamma(\alpha)\Gamma(1+\alpha(j-1))}{\Gamma(1+\alpha j)} \frac{x^{\rho\alpha j}}{\rho} \\
&= \frac{L^j\big(\frac{x^\rho}{\rho}\big)^{\alpha}}{\Gamma(1+\alpha j)} \|y - \tilde{y}\|_{L_\infty[0, x]} \, .
\end{align*} 
This concludes the proof of Theorem 5.1. 
\end{proof}

Next we give a uniqueness theorem that corresponds to the well-known Picard-Lindel\"{o}f theorem for the uniqueness of solution of ordinary differential equations. 
\begin{theorem}[Uniqueness] 
Let $y_0^{(0)}, \ldots, y_0^{(m-1} \in \mathbb{R}, \,\, K > 0 \,\, \text{and} \,\,h^* > 0.$ Also let $\alpha > 0$ and $m = \lceil \alpha \rceil$. Define the set $G$ as in Theorem 4.1 
and let the function $f:G \rightarrow \mathbb{R}$ be continuous and satisfies a Lipschitz condition with respect to the second variable, i.e. 
\begin{equation}
|f(x, y_1) - f(x, y_2)| \le L|y_1 - y_2|
\label{ch4-Lip}
\end{equation}
for some constant $L > 0$ independent of $x, y_1, \text{and}\; y_2$. Then, denoting $h$ as in Theorem  4.1, 
there exists a unique solution $y \in C[0, h]$ for the IVP (\ref{eq:pb-de1}) and (\ref{eq:pb-ic}).
\end{theorem}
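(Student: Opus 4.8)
The plan is to recast the IVP (\ref{eq:pb-de1})--(\ref{eq:pb-ic}) as the fixed-point equation $y = Ay$ for the operator $A$ of (\ref{eq:A}), exactly as in the proof of Theorem 4.1, and then to extract existence \emph{and} uniqueness of the fixed point from the iterated Lipschitz estimate of Theorem \ref{ch4-induc} together with a weighted version of Banach's contraction principle (Weissinger's fixed point theorem). The point is that the single-step map $A$ need not be a contraction, but a suitable power of it is.

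First I would record the reduction. By Lemma \ref{ch4-lm1}, a function $y\in C[0,h]$ solves (\ref{eq:pb-de1})--(\ref{eq:pb-ic}) if and only if it satisfies the Volterra equation (\ref{eq:ch4-eq2}), i.e.\ $y=Ay$. Moreover, the estimates already carried out in the proof of Theorem 4.1 show that $A$ maps the set $U=\{y\in C[0,h]:\|y-T\|_\infty\le K\}$ into itself; since $U$ is a closed subset of the Banach space $\big(C[0,h],\|\cdot\|_\infty\big)$, it is a complete metric space, and $T\in U$ so $U\neq\varnothing$. Thus it suffices to prove that $A:U\to U$ has exactly one fixed point.

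Next I would invoke Theorem \ref{ch4-induc} with $x=h$: for all $j\in\mathbb{N}_0$ and $y,\tilde y\in U$,
\[
\|A^j y-A^j\tilde y\|_\infty\le \alpha_j\,\|y-\tilde y\|_\infty,\qquad \alpha_j:=\frac{L^j\,(h^\rho/\rho)^{\alpha j}}{\Gamma(1+\alpha j)}.
\]
The decisive observation is that $\sum_{j\ge 0}\alpha_j<\infty$: this series is precisely the value $E_\alpha\!\big(L(h^\rho/\rho)^\alpha\big)$ of the Mittag--Leffler function, which is an entire function of its argument; equivalently, by Stirling's formula $\Gamma(1+\alpha j)$ eventually dominates $c^j$ for any fixed $c$. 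Weissinger's fixed point theorem then yields a unique fixed point $y^{*}\in U$ of $A$, together with convergence of the Picard iterates $A^{n}T\to y^{*}$ in $C[0,h]$. (If one prefers to avoid quoting Weissinger: choose $j_0$ with $\alpha_{j_0}<1$, so $A^{j_0}$ is a genuine contraction on $U$ and has a unique fixed point $y^{*}$ by Banach; since $A$ commutes with $A^{j_0}$, $Ay^{*}$ is also fixed by $A^{j_0}$, forcing $Ay^{*}=y^{*}$, and every fixed point of $A$ is a fixed point of $A^{j_0}$, so $A$'s fixed point is unique as well.)

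It remains only to upgrade ``unique in $U$'' to ``unique in $C[0,h]$''. If $y\in C[0,h]$ is \emph{any} solution, then by the reduction it satisfies $y=Ay$, and the bound already used in the proof of Theorem 4.1 gives $|y(x)-T(x)|=\big|\,{}^\rho\mathcal{I}^\alpha_{0+}f(\cdot,y(\cdot))(x)\,\big|\le \frac{M}{\rho^\alpha\Gamma(\alpha+1)}h^{\rho\alpha}\le K$ by the choice of $h$; hence $y\in U$ automatically, and the previous paragraph applies. Together with Theorem 4.1 (or directly with the convergent Picard iteration above) this establishes existence and uniqueness of $y\in C[0,h]$. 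The only genuine content lies in the summability of $\sum_j\alpha_j$, which is a one-line consequence of the growth of the Gamma function; I therefore expect the main ``obstacle'' to be purely bookkeeping — matching the constants to those in Theorem \ref{ch4-induc} and checking that $U$ is complete so that the contraction argument applies on $U$ rather than on all of $C[0,h]$.
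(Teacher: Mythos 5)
Your proposal follows essentially the same route as the paper: reduce the IVP to the fixed-point equation $y=Ay$ on the set $U$ via Lemma \ref{ch4-lm1} and the mapping properties established in Theorem 4.1, apply the iterated estimate of Theorem \ref{ch4-induc} at $x=h$, and conclude by Weissinger's fixed point theorem once the series $\sum_j \omega_j$ is recognized as the convergent Mittag--Leffler series $E_\alpha\bigl(L(h^\rho/\rho)^\alpha\bigr)$. Your added remarks (the Banach-contraction-on-a-power alternative and the explicit upgrade of uniqueness from $U$ to all of $C[0,h]$) are correct refinements of details the paper leaves implicit, but the argument is the same.
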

\begin{proof}
For the following proof we use a similar argument as applied to the proof of the Theorem 6.5 of \cite{AFDE}. According to Theorem 4.1, 
the IVP (\ref{eq:pb-de1}) and (\ref{eq:pb-ic}) has a solution. In order to prove the uniqueness, we adopt Theorem 5.1. 
In particular, we use polynomial $T$ as defined in (\ref{ch4-poly1}) and the operator $A$ as defined in (\ref{eq:A}) and recall that it maps the nonempty, convex and closed set $U = \{y \in C[0, h]: \|y - T\|_\infty \le K\}$ to itself. We apply Weissinger's Fixed Point Theorem 
to prove that $A$ has a unique fixed point. Let $j \in \mathbb{N}_0, x \in [0, h]$ and $y_1, y_2 \in U$. Then, using (\ref{eq:induc}) and taking the Chebyshev norms on the interval $[0, h]$, we have
\begin{equation*}
\|A^jy -A^j\tilde{y} \|_\infty \le \frac{L^j\big(\frac{h^\rho}{\rho}\big)^{\alpha j}}{\Gamma(1+\alpha j)} \|y - \tilde{y} \|_\infty.
\end{equation*}
Let $\omega_j = L^j(h^\rho/\rho)^{\alpha j}/\Gamma(1+\alpha j)$. In order to apply the theorem, we only need to show that the series $\sum_{j=0}^\infty \omega_j $ converges. It is clear from noticing that $\omega_j$ is simply the power series representation of the Mittag-Leffler function $E_\alpha(L(h^{\rho}/\rho)^\alpha)$, hence the series converges. This completes the proof. 
\end{proof}

\begin{remark}
The above proof gives a constructive method to find a sequence of Picard iterations that converges to the exact solution of the IVP. 
\end{remark}




\section*{Acknowledgements}
The author would like to thank Prof. Jerzy Kocik, the Department of Mathematics of Southern Illinois University, Carbondale, USA for his valuable comments and suggestions during the writing of his dissertation. 


\bibliographystyle{abbrv}

\end{document}